
\documentclass[11pt]{amsart}

\usepackage{amssymb, amsmath, amsthm}

\textwidth=160mm \oddsidemargin=0mm \evensidemargin=\oddsidemargin
\headsep=8mm

\newtheorem{thm}{Theorem}

 \newtheorem{lem}[thm]{Lemma}
 \newtheorem{prop}[thm]{Proposition}

    \theoremstyle{definition}
 \newtheorem{defn}[thm]{Definition}

 \theoremstyle{remark}
 



\begin{document}

\title[Symmetric bilinear form]{Symmetric bilinear form on a Lie algebra}

\author{Eun-Hee Cho and Sei-Qwon Oh}

\address{Department of Mathematics, Chungnam National  University, 99 Daehak-ro,   Yuseong-gu, Daejeon 34134, Korea}

\email{ehcho@cnu.ac.kr}
\email{sqoh@cnu.ac.kr}

\thanks{This research has been performed as a subproject of project Research for Applications of Mathematical Principles (No. C21501) and supported by the National Institute of Mathematics Science.}

\subjclass[2010]{17B67}

\keywords{Finite dimensional Lie algebra, Symmetric bilinear form}



\begin{abstract}
Let $\frak g$ be the finite dimensional simple Lie algebra associated to an indecomposable and  symmetrizable generalized Cartan matrix $C=(a_{ij})_{n\times n}$ of finite type and let  $\frak d$ be a finite dimensional Lie algebra related to  a quantum group  $D_{q,p^{-1}}(\frak g)$ obtained by Hodges, Levasseur and Toro \cite{HoLeT} by deforming the quantum group $U_q(\frak g)$.
 Here we see that $\frak d$ is a  generalization of  $\frak g$ and give a $\frak d$-invariant symmetric bilinear form on $\frak d$.
\end{abstract}

\maketitle

Let $C=(a_{ij})_{n\times n}$ be an indecomposable and  symmetrizable generalized Cartan matrix  of finite type and let $\frak g$ be the finite dimensional simple Lie algebra associated to $C$.
(Refer to  \cite[Chapter 2]{HoKa} and \cite[Chapter 1, 2, 4]{Kac}  for details.) Hodges, Levasseur and Toro  constructed a quantum group  $D_{q,p^{-1}}(\frak g)$ in \cite[Theorem 3.5]{HoLeT} by deforming the quantum group $U_q(\frak g)$, which is considered as a generalization of $U_q(\frak g)$, and obtained a Hopf dual $\Bbb C_{q,p}[G]$ of $D_{q,p^{-1}}(\frak g)$ in \cite[\S3]{HoLeT} that is a generalization of the Hopf algebra $\Bbb C_q[G]$ studied  in \cite{Jos} and \cite[\S3]{Tan}. The second author  constructed a finite dimensional Lie algebra $\frak d$ in \cite[Theorem~1.3]{Oh13} by using a skew symmetric bilinear form $u$ on a Cartan subalgebra $\frak h$ of $\frak g$. He showed in \cite[Proposition 1.4]{Oh13} that $\frak d$ is a generalized Lie bialgebra of the standard Lie bialgebra given in $\frak g$. Moreover he studied in  \cite[\S3 and \S5]{Oh13} the Poisson structure of the Hopf dual $\Bbb C[G]$ of the universal enveloping algebra $U(\frak d)$ that is considered as a Poisson version of $\Bbb C_{q,p}[G]$.
 In this note we see that $\frak d$ is a generalization of $\frak g$  (Proposition~\ref{GENERAL}) and find a $\frak d$-invariant symmetric bilinear form on $\frak d$ (Theorem~\ref{MCHOH}).
\medskip

We begin with explaining  the notations in \cite[1.1]{Oh13}. Let $C=(a_{ij})_{n\times n}$ be an indecomposable and  symmetrizable generalized Cartan matrix  of finite type.
  Hence there exists a diagonal matrix $D=\text{diag}(d_1,\ldots,d_n)$,  where all
  $d_i$ are positive integers,   such that the matrix $DC$ is symmetric positive definite.
  (Each $d_i$ is denoted by $s_i$ and $\epsilon_i^{-1}$ in  \cite[\S2.3]{HoKa} and \cite[Chapter 2]{Kac} respectively.)
Throughout the paper, we denote by
$$\begin{array}{ll}
\frak g=(\frak g, [\cdot,\cdot]_{\frak g}) &\text{the finite dimensional simple  Lie algebra over the} \\
&\text{complex number field  $\Bbb C$ associated to  $C$},\\
\frak h &\text{a Cartan subalgebra of $\frak g$ with simple roots $\alpha_1, \ldots, \alpha_n$}.
\end{array}$$
Choose $h_i\in\frak h$, $1\leq i\leq n$, such that
\begin{equation}\label{Z}
\alpha_j:\frak h\longrightarrow \Bbb C,\ \ \alpha_j(h_i)=a_{ij}\ \text{ for all }j=1,\ldots,n.
\end{equation}
Then $\{h_i\}_{i=1}^n$ forms  a basis of $\frak h$, since  $C$ has rank $n$, and
$\frak g$ is generated by $h_i$ and $x_{\pm\alpha_i}$, $i=1, \ldots , n$, with relations
$$\begin{array}{c}
[h_i,h_j]_{\frak g}=0,\  [h_i,x_{\pm\alpha_j}]_{\frak g}=\pm a_{ij}x_{\pm\alpha_j}, \ [x_{\alpha_i},x_{-\alpha_j}]_{\frak g}=\delta_{ij}h_i,\\
(\text{ad}_{x_{\pm\alpha_i}})^{1-a_{ij}}(x_{\pm\alpha_j})=0,\ \ i\neq j
\end{array}$$
by \cite[Definition 2.1.3]{HoKa}. (In \cite[Definition 2.1.3]{HoKa}, $x_{\alpha_i}$ and $x_{-\alpha_i}$ are denoted by $e_i$ and $f_i$ respectively.)
Denote by
$$\begin{array}{ll}
\frak n^+ &\text{the subspace of $\frak g$ spanned by root vectors with positive roots}\\
\frak n^- &\text{the subspace of $\frak g$ spanned by root vectors with negative roots}
\end{array}
$$
 and set $$\frak n=\frak n^-\oplus\frak n^+.$$
  Hence
$$\frak g=\frak h\oplus\frak n=\frak n^-\oplus\frak h\oplus\frak n^+.$$
Henceforth  we mean by $x_\alpha$ that $x_\alpha$ is a root vector of $\frak n$ with root $\alpha$.

By \cite[\S2.3]{HoKa}, there exists a nondegenerate  symmetric bilinear form $(\cdot|\cdot)$ on $\frak h^*$ given
by
\begin{equation}\label{Y}
(\alpha_i|\alpha_j)=d_ia_{ij}
\end{equation}
 for all $i,j=1,\ldots,n$.
This  form $(\cdot|\cdot)$ induces the isomorphism
$$\frak h^*\longrightarrow\frak h,\ \lambda\mapsto h_\lambda,$$ where $h_\lambda$ is  defined by
$$(\alpha_i|\lambda)=\alpha_i(h_\lambda)\ \ \text{ for all } i=1,\ldots,n.$$
(The isomorphism $\frak h^*\longrightarrow\frak h,  \lambda\mapsto h_\lambda$, is denoted by $\nu^{-1}$ in \cite[\S2.3]{HoKa}
and \cite[Chapter 2]{Kac}.)
Identifying $\frak h^*$ to $\frak h$   via $\lambda\mapsto h_\lambda$,
 $\frak h$ has a nondegenerate  symmetric bilinear form $(\cdot|\cdot)$ given by
 $$(\lambda| \mu)=(h_\lambda|h_\mu)=\lambda(h_\mu).$$
This is extended to
a nondegenerate $\frak g$-invariant symmetric bilinear form on $\frak g$ by \cite[Theorem 2.2 and its proof]{Kac} and \cite[(2.7) and Proposition 2.3.6]{HoKa}:
\begin{equation}\label{YA}
(h_i|h_j)=d_j^{-1}a_{ij},\ \ (h|x_{\alpha})=0,\ \ (x_{\alpha}|x_{\beta})=0\text{ if } \alpha+\beta\neq0,\ \
(x_{\alpha_i}|x_{-\alpha_j})=d_i^{-1}\delta_{ij}
\end{equation}
for  $h\in\frak h$, root vectors $x_\alpha,x_\beta$ and $i=1,\ldots,n$.

\begin{lem}\label{Oh13}\cite[Lemma 1.1]{Oh13}
For each positive root $\alpha$,
\begin{equation}\label{X}
[x_{\alpha},x_{-\alpha}]_{\frak g}=(x_\alpha|x_{-\alpha})h_{\alpha}.
\end{equation}
\end{lem}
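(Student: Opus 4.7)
The plan is to exploit the $\mathfrak g$-invariance and nondegeneracy of the bilinear form $(\cdot\,|\,\cdot)$ together with the standard fact that the bracket of root vectors for roots $\alpha$ and $-\alpha$ lies in $\mathfrak h$.

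First I would argue that $[x_\alpha, x_{-\alpha}]_{\mathfrak g} \in \mathfrak h$. This is immediate from the root space decomposition: for $h\in\mathfrak h$, one computes $[h, [x_\alpha, x_{-\alpha}]_{\mathfrak g}]_{\mathfrak g} = 0$ using the Jacobi identity and the fact that $[h, x_{\pm\alpha}]_{\mathfrak g} = \pm\alpha(h)x_{\pm\alpha}$. (Alternatively, the bracket has weight $\alpha + (-\alpha) = 0$.) So $[x_\alpha, x_{-\alpha}]_{\mathfrak g}$ is determined, via the nondegenerate form restricted to $\mathfrak h$, by its pairings against arbitrary $h\in\mathfrak h$.

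Next I would pair. For any $h\in\mathfrak h$, using $\mathfrak g$-invariance $([a,b]\,|\,c)=(a\,|\,[b,c])$ and the defining identity $\alpha(h)=(\alpha|\alpha\text{-dual})=(h_\alpha\,|\,h)$, I compute
\begin{equation*}
\bigl(h\,\big|\,[x_\alpha,x_{-\alpha}]_{\mathfrak g}\bigr)
=\bigl([h,x_\alpha]_{\mathfrak g}\,\big|\,x_{-\alpha}\bigr)
=\alpha(h)\bigl(x_\alpha\,\big|\,x_{-\alpha}\bigr)
=\bigl(h\,\big|\,h_\alpha\bigr)\bigl(x_\alpha\,\big|\,x_{-\alpha}\bigr)
=\Bigl(h\,\Big|\,(x_\alpha|x_{-\alpha})\,h_\alpha\Bigr).
\end{equation*}
Since both $[x_\alpha,x_{-\alpha}]_{\mathfrak g}$ and $(x_\alpha|x_{-\alpha})h_\alpha$ lie in $\mathfrak h$ and their inner products against every $h\in\mathfrak h$ agree, nondegeneracy of $(\cdot\,|\,\cdot)|_{\mathfrak h}$ forces the equality \eqref{X}.

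There is no real obstacle here; the only subtlety is to make sure the input hypotheses used are exactly what was listed in \eqref{YA} (orthogonality of $\mathfrak h$ and root spaces, and the identification $\alpha(h)=(h_\alpha|h)$ coming from $\lambda\mapsto h_\lambda$). Everything else is a two-line consequence of invariance.
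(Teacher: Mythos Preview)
Your argument is correct and is exactly the standard proof of this identity: use that $[x_\alpha,x_{-\alpha}]_{\frak g}$ lies in $\frak h$, pair against an arbitrary $h\in\frak h$ using $\frak g$-invariance of $(\cdot|\cdot)$, and conclude by nondegeneracy. The paper itself does not prove the lemma at all --- it is simply quoted from \cite[Lemma~1.1]{Oh13} --- so there is nothing to compare; your two-line computation is the natural justification and uses only the ingredients already recorded in the paper (the identification $\alpha(h)=(h_\alpha|h)$ and the invariance/nondegeneracy in (\ref{YA})).
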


 Let  $u=(u_{ij})$  be a skew symmetric $n\times n$-matrix with entries in $\Bbb C$. Then $u$ induces a skew symmetric bilinear (alternating)  form $u$
on $\frak h^*$ given by
$$u(\lambda,\mu):=\sum_{i,j} u_{ij} \lambda(h_i)\mu(h_j)$$
for any $\lambda,\mu\in \frak h^*$.
Hence there exists a unique linear map  $\Phi:\frak h^*\longrightarrow \frak h^*$ such that
$$u(\lambda,\mu)=(\Phi(\lambda)|\mu)$$
for any $\lambda,\mu\in \frak h^*$ since the form $(\cdot|\cdot)$ on $\frak h^*$ is nondegenerate. Set
$$ \Phi_+=\Phi + I,\ \ \  \Phi_-=\Phi -I,$$
where $I$ is the identity map on $\frak h^*$.  Thus
\begin{equation}\label{Phi}
\begin{array}{c}
(\Phi_+\lambda|\mu)=u(\lambda,\mu)+(\lambda|\mu)\\
(\Phi_-\lambda|\mu)=u(\lambda,\mu)-(\lambda|\mu)
\end{array}
\end{equation}
 for all $\lambda,\mu\in\frak h^*$.

Fix a vector space $\frak k$ isomorphic to $\frak h$ and let
\begin{equation}\label{phi}
\varphi:\frak h\longrightarrow\frak k
\end{equation}
be an isomorphism of vector spaces. For each $\lambda\in\frak h^*$, denote by $k_\lambda\in\frak k$ the  element $\varphi(h_\lambda)$. Let
$$\frak g':=\frak k\oplus\frak n=\frak n^-\oplus\frak k\oplus\frak n^+$$
 be the Lie algebra isomorphic to $\frak g$ such that each element $k_\lambda\in\frak k$ corresponds to the element  $h_\lambda$
 and each root vector $x_\alpha$ corresponds to  $x_\alpha$.  That is, $\frak g'$ is the Lie algebra with Lie bracket
\begin{equation}\label{Gprime}
\begin{array}{ll}
\ [k_\lambda,k_\mu]_{\frak g'}=0, &
[k_\lambda, x_\alpha]_{\frak g'}=(\alpha|\lambda)x_\alpha, \\
 \ [x_\alpha,x_\beta]_{\frak g'}=[x_\alpha,x_\beta]_{\frak g}\ \  (\alpha\neq-\beta),&
[x_\alpha,x_{-\alpha}]_{\frak g'}=\varphi([x_\alpha,x_{-\alpha}]_{\frak g}),
\end{array}
\end{equation}
where $\lambda,\mu\in\frak h^*$ and $x_\alpha,x_{-\alpha},x_\beta$ are root vectors with roots $\alpha,-\alpha,\beta$ respectively.

\begin{defn} \cite[Theorem 1.3]{Oh13} The vector space $\frak d:=\frak n^-\oplus\frak k\oplus \frak h\oplus \frak n^+$
is a Lie algebra with Lie bracket
\begin{eqnarray}
\ &[h_\lambda, h_\mu]=0, \ \ \ [h_\lambda,k_\mu]=0, \ \ \ [k_\lambda,k_\mu]=0,\label{Lie bracket1}\\
\ &[h_\lambda, x_\alpha]=-(\Phi_-\lambda|\alpha)x_\alpha,\label{Lie bracket2}\\
\ &[k_\lambda,x_\alpha]=(\Phi_+\lambda|\alpha)x_\alpha,\label{Lie bracket3}\\
\ &[x_\alpha,x_\beta]=2^{-1}([x_\alpha,x_\beta]_{\frak g}+[x_\alpha,x_\beta]_{\frak g'})\label{Lie bracket4}
\end{eqnarray}
for all $h_\lambda, h_\mu\in\frak h$,  $k_\lambda, k_\mu\in\frak k$ and  root vectors $x_\alpha,x_\beta\in\frak n=\frak n^-\oplus\frak n^+$.
\end{defn}

The Lie algebra $\frak d$ is a generalization of $\frak g$ as seen in the following proposition.

\begin{prop}\label{GENERAL}
(a)  Let $\frak l$ be the subspace of $\frak d$ spanned by all $h_\lambda-k_\lambda, \lambda\in\text{rad}(u)$, where
$$\text{rad}(u)=\{\lambda\in\frak h^*|u(\lambda,\frak h^*)=0\}.$$ Then $\frak l$ is a solvable ideal of $\frak d$. In particular, if   $u=0$ then  $\frak l$  is the maximal solvable ideal of $\frak d$ and  $\frak d/\frak l$
is isomorphic to $\frak g$.

(b) Let $\frak m$ be the subspace of $\frak d$ spanned by all root vectors $x_\alpha$ and all $h_\lambda+k_\lambda, \lambda\in\frak h^*$. Then
$\frak m$ is an ideal of $\frak d$ isomorphic to $\frak g$.

\end{prop}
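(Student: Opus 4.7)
\medskip
\noindent\textbf{Plan for Proposition \ref{GENERAL}.}

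For part (a), the crucial observation is to unpack what $\lambda\in\text{rad}(u)$ means for $\Phi$: by the defining relation $u(\lambda,\mu)=(\Phi\lambda|\mu)$, the condition $u(\lambda,\frak h^*)=0$ together with nondegeneracy of $(\cdot|\cdot)$ on $\frak h^*$ forces $\Phi\lambda=0$, hence $\Phi_+\lambda=\lambda$ and $\Phi_-\lambda=-\lambda$. Brackets (\ref{Lie bracket2}) and (\ref{Lie bracket3}) then give $[h_\lambda,x_\alpha]=(\lambda|\alpha)x_\alpha=[k_\lambda,x_\alpha]$, so $[h_\lambda-k_\lambda,x_\alpha]=0$; combined with (\ref{Lie bracket1}) this shows $\frak l$ lies in the center of $\frak d$, so $\frak l$ is an abelian (hence solvable) ideal.

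When $u=0$, we have $\text{rad}(u)=\frak h^*$ and $\frak l=\text{span}\{h_\lambda-k_\lambda:\lambda\in\frak h^*\}$. I would define a linear map $\pi:\frak d\to\frak g$ by $x_\alpha\mapsto x_\alpha$, $h_\lambda\mapsto h_\lambda$, $k_\lambda\mapsto h_\lambda$, and verify it is a Lie algebra homomorphism: using $\Phi=0$, $\Phi_+=I$, $\Phi_-=-I$, the weight actions in (\ref{Lie bracket2})--(\ref{Lie bracket3}) both reduce to $(\lambda|\alpha)x_\alpha=[h_\lambda,x_\alpha]_{\frak g}$; for (\ref{Lie bracket4}) with $\alpha+\beta\neq 0$ the two brackets in $\frak g$ and $\frak g'$ agree by (\ref{Gprime}); for $\beta=-\alpha$, Lemma \ref{Oh13} gives $[x_\alpha,x_{-\alpha}]_{\frak d}=\tfrac12(x_\alpha|x_{-\alpha})(h_\alpha+k_\alpha)$, which $\pi$ sends to $(x_\alpha|x_{-\alpha})h_\alpha=[x_\alpha,x_{-\alpha}]_{\frak g}$. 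The kernel of $\pi$ is clearly $\frak l$, yielding $\frak d/\frak l\cong\frak g$. Since $\frak g$ is simple, the radical of $\frak d$ must map into the (zero) radical of $\frak g$, so it is contained in $\frak l$; equality then follows from $\frak l$ itself being a solvable ideal.

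For part (b), I would construct an explicit isomorphism $\psi:\frak g\to\frak m$ by $\psi(x_\alpha)=x_\alpha$ and $\psi(h_\lambda)=\tfrac12(h_\lambda+k_\lambda)$. The key weight computation is
\[
[h_\lambda+k_\lambda,x_\alpha]=\bigl(-\Phi_-\lambda+\Phi_+\lambda\,\big|\,\alpha\bigr)x_\alpha=2(\lambda|\alpha)x_\alpha,
\]
where the two $u(\lambda,\alpha)$ contributions cancel; this makes $\psi(h_\lambda)$ act with weight $(\lambda|\alpha)$ on $x_\alpha$, matching $[h_\lambda,x_\alpha]_{\frak g}$. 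The bracket check for opposite root vectors uses Lemma \ref{Oh13} again: $[x_\alpha,x_{-\alpha}]_{\frak d}=\tfrac12(x_\alpha|x_{-\alpha})(h_\alpha+k_\alpha)=\psi\bigl((x_\alpha|x_{-\alpha})h_\alpha\bigr)=\psi([x_\alpha,x_{-\alpha}]_{\frak g})$. Brackets of root vectors with $\alpha+\beta\neq 0$ already agree with $\frak g$ by (\ref{Lie bracket4}) and (\ref{Gprime}). Injectivity of $\psi$ is immediate from the direct sum decomposition $\frak d=\frak n^-\oplus\frak k\oplus\frak h\oplus\frak n^+$. Finally, $\frak m$ is an ideal because $[h_\mu,\cdot]$ and $[k_\mu,\cdot]$ send root vectors to scalar multiples of root vectors (by (\ref{Lie bracket2})--(\ref{Lie bracket3})) and annihilate every element of $\frak k\oplus\frak h$ (by (\ref{Lie bracket1})), so they stabilize $\frak m$.

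The main obstacles are minor bookkeeping: keeping the $\Phi_\pm$ signs straight so that the $u$-contributions \emph{cancel} on $h_\lambda+k_\lambda$ (used in (b)) and \emph{double} on $h_\lambda-k_\lambda$ for $\lambda\in\text{rad}(u)$ (used in (a)); and invoking simplicity of $\frak g$ in the quotient to upgrade $\frak l$ from a solvable ideal to the \emph{maximal} one. No deep input beyond the bracket identities and Lemma \ref{Oh13} is needed.
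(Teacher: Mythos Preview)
Your proposal is correct and follows essentially the same approach as the paper: the paper computes directly $[h_\lambda-k_\lambda,x_\alpha]=-2u(\lambda,\alpha)x_\alpha$ and $[h_\lambda+k_\lambda,x_\alpha]=2(\lambda|\alpha)x_\alpha$, uses Lemma~\ref{Oh13} for $[x_\alpha,x_{-\alpha}]$, and exhibits the isomorphism $\frak m\to\frak g$, $h_\lambda+k_\lambda\mapsto 2h_\lambda$, $x_\alpha\mapsto x_\alpha$ (the inverse of your $\psi$). Your treatment is somewhat more explicit in places---constructing $\pi$ and checking it is a homomorphism rather than asserting $\frak d/\frak l\cong\frak g$, and passing through $\Phi\lambda=0$---but the substance is identical.
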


\begin{proof}
(a) Note that
$$[h_\lambda-k_\lambda, x_\alpha]=-2u(\lambda,\alpha)x_\alpha$$ for any root vector $x_\alpha$. Hence
if  $\lambda\in\text{rad}(u)$ then $[h_\lambda-k_\lambda, x_\alpha]=0$ for all root vectors $x_\alpha$. It follows that $\frak l$ is an ideal of $\frak d$. Clearly $\frak l$ is solvable by (\ref{Lie bracket1}).

Suppose that $u=0$. Then $\text{rad}(u)=\frak h^*$ and thus the ideal $\frak l$ is the subspace spanned by all $h_\lambda-k_\lambda, \lambda\in\frak h^*$, which is solvable.  It is easy to see that $\frak g\cong\frak d/\frak l$ by (\ref{Lie bracket4}) and (\ref{Gprime}). Hence $\frak d/\frak l$ is simple and thus $\frak l$ is the unique maximal solvable ideal of $\frak d$.

(b)  Note that
$$[h_\lambda+k_\lambda, x_\alpha]=2(\lambda|\alpha)x_\alpha$$ for any root vector $x_\alpha$ and
$$[x_\alpha,x_{-\alpha}]=2^{-1}(x_\alpha|x_{-\alpha})(h_\alpha+k_\alpha)$$ for any positive root $\alpha$ by Lemma~\ref{Oh13} and (\ref{Lie bracket4}). Hence $\frak m$ is an ideal of $\frak d$ by (\ref{Lie bracket1})-(\ref{Lie bracket4}). Moreover $\frak m$ is isomorphic to $\frak g$ since
the linear map from $\frak m$ into $\frak g$ defined by
$$h_\lambda+k_\lambda\mapsto 2h_\lambda,\ \ \  x_\alpha\mapsto x_\alpha\ \ \ (\lambda\in\frak h^*)$$
 is a Lie algebra  isomorphism.
\end{proof}


We give a $\frak d$-invariant symmetric bilinear form on $\frak d$ as in the following theorem.

\begin{thm}\label{MCHOH}
Define a bilinear form $(\cdot|\cdot)_{\frak d}$ on $\frak d$ by
\begin{eqnarray}
&(h| h')_{\frak d}=2(h|h'),& (h|x)_{\frak d}=(h|x)=0,\label{Cho1}\\
&(x|h)_{\frak d}=(x|h)=0,&(x|x')_{\frak d}=(x|x'),\label{Cho2}\\
 &(h_\lambda|k_\mu)_{\frak d}=-2u(\lambda,\mu),&(k_\mu|h_\lambda)_{\frak d}=2u(\mu,\lambda),\label{Cho3}\\
 &(k|x)_{\frak d}=(\varphi^{-1}(k)|x)=0,&(x|k)_{\frak d}=(x|\varphi^{-1}(k))=0,\label{Cho4} \\
 &(k|k')_{\frak d}=2(\varphi^{-1}(k)|\varphi^{-1}(k'))&\label{Cho5}
\end{eqnarray}
for $h,h', h_\lambda\in\frak h$, $x,x'\in\frak n^+\oplus\frak n^-$, $k,k', k_\mu\in\frak k$, where $\varphi$ is the isomorphism given in (\ref{phi}). Then $(\cdot|\cdot)_{\frak d}$ is a $\frak d$-invariant symmetric
 bilinear form.
\end{thm}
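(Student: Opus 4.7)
There are two things to verify: symmetry of $(\cdot|\cdot)_{\frak d}$, and $\frak d$-invariance, i.e., $([x,y]|z)_{\frak d}=(x|[y,z])_{\frak d}$ for all $x,y,z\in\frak d$. Symmetry is immediate from (\ref{Cho1})--(\ref{Cho5}): every pairing is either zero or a scalar multiple of the symmetric form $(\cdot|\cdot)$ on $\frak g$, except the $\frak h$--$\frak k$ block, where $(h_\lambda|k_\mu)_{\frak d}=-2u(\lambda,\mu)$ and $(k_\mu|h_\lambda)_{\frak d}=2u(\mu,\lambda)$ agree by skew-symmetry of $u$.

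For invariance I would do a case analysis on which subspace ($\frak h$, $\frak k$, or $\frak n=\frak n^-\oplus\frak n^+$) contains each of $x,y,z$. In every case where at most one of them lies in $\frak n$, both sides are zero: by (\ref{Lie bracket1}), $[\frak h\oplus\frak k,\frak h\oplus\frak k]=0$, and $[\frak h\oplus\frak k,\frak n]\subseteq\frak n$ pairs trivially with $\frak h\oplus\frak k$ under $(\cdot|\cdot)_{\frak d}$ by the off-diagonal entries of (\ref{Cho1})--(\ref{Cho4}).

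The core case is one Cartan-type element and two root vectors. For $([h_\lambda,x_\alpha]|x_\beta)_{\frak d}=(h_\lambda|[x_\alpha,x_\beta])_{\frak d}$, both sides vanish unless $\alpha+\beta=0$ (as $[x_\alpha,x_\beta]\in\frak n$ when $\alpha+\beta$ is a nonzero root, and $(x_\alpha|x_\beta)=0$ otherwise). For $\beta=-\alpha$, the left side is $-(\Phi_-\lambda|\alpha)(x_\alpha|x_{-\alpha})$ by (\ref{Lie bracket2}), while Lemma~\ref{Oh13} together with (\ref{Lie bracket4}) gives
\[
[x_\alpha,x_{-\alpha}]=2^{-1}(x_\alpha|x_{-\alpha})(h_\alpha+k_\alpha),
\]
so by (\ref{Cho1}) and (\ref{Cho3}) the right side reduces to $(x_\alpha|x_{-\alpha})\bigl((\lambda|\alpha)-u(\lambda,\alpha)\bigr)$, which matches by (\ref{Phi}). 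The analogous identity with $k_\lambda$ replacing $h_\lambda$ uses $[k_\lambda,x_\alpha]=(\Phi_+\lambda|\alpha)x_\alpha$ together with $(k_\lambda|h_\alpha)_{\frak d}=2u(\lambda,\alpha)$ and $(k_\lambda|k_\alpha)_{\frak d}=2(\lambda|\alpha)$, and again matches via (\ref{Phi}). Orderings with the Cartan element in position two or three are obtained from these by using the already-established symmetry of $(\cdot|\cdot)_{\frak d}$ and skew-symmetry of the bracket.

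Finally, when $x,y,z$ are all root vectors $x_\alpha,x_\beta,x_\gamma$, both sides vanish unless $\alpha+\beta+\gamma=0$; since $0$ is not a root, no pairwise sum is zero, so by (\ref{Lie bracket4}) and (\ref{Gprime}) every bracket agrees with the $\frak g$-bracket, and the identity reduces to the known $\frak g$-invariance of $(\cdot|\cdot)$ on $\frak g$. The main obstacle is the mixed Cartan--root-vector case: the factors of $2$ in (\ref{Cho1}) and (\ref{Cho5}) and the sign conventions of $u$ in (\ref{Cho3}) must align exactly with the definitions of $\Phi_\pm$ in (\ref{Phi}), and verifying this calibration is the heart of the argument.
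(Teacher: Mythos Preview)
Your proposal is correct and follows essentially the same route as the paper: both establish symmetry from the skew-symmetry of $u$ and the symmetry of $(\cdot|\cdot)$, then verify invariance by a case analysis on which of $\frak h$, $\frak k$, $\frak n$ contain the three arguments, with the only nontrivial computations being the mixed Cartan/root-vector cases (handled via (\ref{Phi}), (\ref{Lie bracket4}), and Lemma~\ref{Oh13}) and the three-root-vector case reducing to $\frak g$-invariance of $(\cdot|\cdot)$. Your organization is slightly more compact than the paper's eleven enumerated cases, but the content is the same.
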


\begin{proof}
Since  $(\cdot|\cdot)$ is a symmetric bilinear form on $\frak g$ and $u$ is a skew symmetric bilinear form on $\frak h^*$, $(\cdot|\cdot)_{\frak d}$ is clearly symmetric by (\ref{Cho1})-(\ref{Cho5}).

Let us show that $(\cdot|\cdot)_{\frak d}$ is $\frak d$-invariant, that is,
$$(a|[b,c])_{\frak d}=([a,b]|c)_\frak d$$
for all $a,b,c\in \frak d$.

Case I. $a=x_\alpha,b=x_\beta, c=x_\gamma$:

(a) $\alpha+\beta+\gamma\neq0$:
Note that
 $(x_{\alpha'}|x_{\beta'})_{\frak d}=(x_{\alpha'}|x_{\beta'})=0$  for root vectors $x_{\alpha'},x_{\beta'}$ of $\frak n$ with $\alpha'+\beta'\neq0$ by (\ref{YA}) and (\ref{Cho2}). Hence
$$([x_\alpha,x_\beta]| x_\gamma)_{\frak d}=0$$
 since $[x_\alpha,x_\beta]\in\frak n$ if $\alpha+\beta\neq0$ and $[x_\alpha,x_\beta]\in \frak h\oplus \frak k$ if $\alpha+\beta=0$ by (\ref{Cho1}) and  (\ref{Cho4}). Similarly
$$(x_\alpha|[x_\beta, x_\gamma])_{\frak d}=0$$
since $[x_\beta,x_\gamma]\in\frak n$ if $\beta+\gamma\neq0$ and $[x_\beta,x_\gamma]\in\frak h\oplus\frak k$ if $\beta+\gamma=0$  by (\ref{Cho2}) and  (\ref{Cho4}).
Hence
$$([x_\alpha,x_\beta]| x_\gamma)_{\frak d}=0=(x_\alpha|[x_\beta, x_\gamma])_{\frak d}.$$

 (b) $\alpha+\beta+\gamma=0$: Since $\alpha+\beta=-\gamma$, we have $[x_\alpha,x_\beta]=[x_\alpha,x_\beta]_{\frak g}$ by (\ref{Gprime}) and (\ref{Lie bracket4}). Similarly,
 since $\beta+\gamma=-\alpha$, we also have $[x_\beta,x_\gamma]=[x_\beta,x_\gamma]_{\frak g}$ by (\ref{Gprime}) and (\ref{Lie bracket4}). Hence  we have that
 $$([x_\alpha,x_\beta]|x_\gamma)_{\frak d}=([x_\alpha,x_\beta]_{\frak g}|x_\gamma)=(x_\alpha|[x_\beta,x_\gamma]_{\frak g})
 =(x_\alpha|[x_\beta,x_\gamma])_{\frak d}$$
 by (\ref{Cho2}).

Case II. $a=h_\lambda\in \frak h$, $b=x_\beta$,  $c=x_\gamma$:

 (a) $\beta+\gamma=0$: Note that $x_\gamma=x_{-\beta}$.
Since $[h_\lambda,x_\beta]=-(\Phi_-\lambda|\beta)x_\beta$ by (\ref{Lie bracket2}) and
$$[x_\beta,x_{-\beta}]=2^{-1}([x_\beta,x_{-\beta}]_{\frak g}+[x_\beta,x_{-\beta}]_{\frak g'})=2^{-1}(x_\beta|x_{-\beta})h_{\beta}+2^{-1}(x_\beta|x_{-\beta})k_{\beta}$$
 by (\ref{Lie bracket4}) and (\ref{X}),
we have that
$$
\begin{aligned}
([h_\lambda,x_\beta]| x_{-\beta})_{\frak d}&=-(\Phi_-\lambda|\beta)(x_\beta| x_{-\beta})_{\frak d}&&(\text{by } (\ref{Lie bracket2}))\\
&=(\lambda|\beta)(x_\beta| x_{-\beta})-u(\lambda,\beta)(x_\beta| x_{-\beta})&&(\text{by }(\ref{Phi}), (\ref{Cho2}))\\
&=2^{-1}(h_\lambda| (x_\beta| x_{-\beta})h_\beta)_{\frak d}+2^{-1}(h_\lambda|(x_\beta| x_{-\beta})k_\beta)_{\frak d}\ &&(\text{by }(\ref{Cho1}), (\ref{Cho3}))\\
&=2^{-1}(h_\lambda|[x_\beta, x_{-\beta}]_{\frak g}+[x_\beta, x_{-\beta}]_{\frak g'})_{\frak d}\ &&(\text{by }(\ref{X}))\\
&=(h_\lambda|[x_\beta, x_{-\beta}])_{\frak d}.\ &&(\text{by }(\ref{Lie bracket4}))
\end{aligned}$$

 (b) $\beta+\gamma\neq0$:
Since $[x_\beta,x_\gamma]\in \frak n$, we have that
$$
\begin{aligned}
([h_\lambda,x_\beta]| x_{\gamma})_{\frak d}&=-(\Phi_-\lambda|\beta)(x_\beta| x_{\gamma})_{\frak d}=0=(h_\lambda|[x_\beta,x_\gamma])_{\frak d}
\end{aligned}$$
by (\ref{YA}), (\ref{Lie bracket2}), (\ref{Cho1}) and (\ref{Cho2}).

Case III. $a=k_\lambda\in \frak k$, $b=x_\beta$,  $c=x_\gamma$:
This case is to be shown as in Case II. We repeat it for completion.

 (a) $\beta+\gamma=0$: Note that $x_\gamma=x_{-\beta}$.
Since
$$[x_\beta,x_{-\beta}]=2^{-1}([x_\beta,x_{-\beta}]_{\frak g}+[x_\beta,x_{-\beta}]_{\frak g'})=2^{-1}(x_\beta|x_{-\beta})h_{\beta}+2^{-1}(x_\beta|x_{-\beta})k_{\beta}$$
 by (\ref{Lie bracket4}) and (\ref{X}),
we have that
$$
\begin{aligned}
([k_\lambda,x_\beta]| x_{-\beta})_{\frak d}&=(\Phi_+\lambda|\beta)(x_\beta| x_{-\beta})_{\frak d}&&(\text{by }(\ref{Lie bracket3}))\\
&=(\lambda|\beta)(x_\beta| x_{-\beta})+u(\lambda,\beta)(x_\beta| x_{-\beta})&&(\text{by }(\ref{Phi}),(\ref{Cho2}))\\
&=2^{-1}(k_\lambda| (x_\beta| x_{-\beta})k_\beta)_{\frak d}+2^{-1}(k_\lambda|(x_\beta| x_{-\beta})h_\beta)_{\frak d}&&(\text{by }(\ref{Cho5}), (\ref{Cho3}))\\
&=2^{-1}(k_\lambda|[x_\beta, x_{-\beta}]_{\frak g}+[x_\beta, x_{-\beta}]_{\frak g'})_{\frak d}&&(\text{by }(\ref{X}))\\
&=(k_\lambda|[x_\beta, x_{-\beta}])_{\frak d}.&&(\text{by }(\ref{Lie bracket4}))
\end{aligned}$$

(b) $\beta+\gamma\neq0$:
Since $[x_\beta,x_\gamma]\in\frak n$, we have that
$$
\begin{aligned}
([k_\lambda,x_\beta]| x_{\gamma})_{\frak d}&=(\Phi_+\lambda|\beta)(x_\beta| x_{\gamma})_{\frak d}=0=(k_\lambda|[x_\beta,x_\gamma])_{\frak d}
\end{aligned}$$
by (\ref{YA}), (\ref{Lie bracket3}), (\ref{Cho2}) and (\ref{Cho4}).

Case IV.  $a=x_\beta$, $b=x_\gamma$, $c=h_\lambda\in \frak h$: Since  $(\cdot|\cdot)_{\frak d}$ is symmetric,  we have that
$$
([x_\beta,x_\gamma]|h_\lambda)_{\frak d}=-(h_\lambda|[x_\gamma,x_\beta])_{\frak d}
=-([h_\lambda,x_\gamma]|x_\beta)_{\frak d}=(x_\beta|[x_\gamma,h_\lambda])_{\frak d}
$$    by Case II.

Case V.  $a=x_\beta$, $b=x_\gamma$, $c=k_\lambda\in \frak k$: Since  $(\cdot|\cdot)_{\frak d}$ is  symmetric,  we have that
$$
([x_\beta,x_\gamma]|k_\lambda)_{\frak d}=-(k_\lambda|[x_\gamma,x_\beta])_{\frak d}
=-([k_\lambda,x_\gamma]|x_\beta)_{\frak d}=(x_\beta|[x_\gamma,k_\lambda])_{\frak d}
$$    by Case III.

Case VI.  $a=x_\beta$, $b=h_\lambda\in \frak h$,  $c=x_\gamma$: Since $(\cdot|\cdot)_{\frak d}$ is symmetric, we have that
$$\begin{aligned}
([x_\beta,h_\lambda]|x_\gamma)_{\frak d}&=-([h_\lambda,x_\beta]|x_\gamma)_{\frak d}=-(h_\lambda|[x_\beta,x_\gamma])_{\frak d}\\
&=(h_\lambda|[x_\gamma,x_\beta])_{\frak d}=([h_\lambda,x_\gamma]|x_\beta)_{\frak d}=(x_\beta|[h_\lambda,x_\gamma])_{\frak d}
\end{aligned}$$
by Case II.

Case VII.  $a=x_\beta$, $b=k_\lambda\in \frak k$,  $c=x_\gamma$: Since $(\cdot|\cdot)_{\frak d}$ is symmetric, we have that
$$\begin{aligned}
([x_\beta,k_\lambda]|x_\gamma)_{\frak d}&=-([k_\lambda,x_\beta]|x_\gamma)_{\frak d}=-(k_\lambda|[x_\beta,x_\gamma])_{\frak d}\\
&=(k_\lambda|[x_\gamma,x_\beta])_{\frak d}=([k_\lambda,x_\gamma]|x_\beta)_{\frak d}=(x_\beta|[k_\lambda,x_\gamma])_{\frak d}
\end{aligned}$$
by Case III.

Case VIII.  $a\in\frak h\oplus \frak k$, $b\in\frak h\oplus \frak k$,  $c=x_\gamma$:
Clearly we   have  that
$$([a,b]|x_\gamma)_{\frak d}=0=(a|[b,x_\gamma])_{\frak d}$$
 by (\ref{Lie bracket1}), (\ref{Cho1}) and (\ref{Cho4}).

Case IX:  $a\in\frak h\oplus \frak k$,  $b=x_\gamma$, $c\in\frak h\oplus \frak k$:
Clearly we   have  that
$$([a,x_\gamma]|c)_{\frak d}=0=(a|[x_\gamma,c])_{\frak d}$$
by (\ref{Cho1}), (\ref{Cho2}) and   (\ref{Cho4}).

Case X. $a=x_\gamma$,    $b\in\frak h\oplus \frak k$,   $c\in\frak h\oplus \frak k$:
Clearly we   have  that
$$([x_\gamma,b]|c)_{\frak d}=0=(x_\gamma|[b,c])_{\frak d}$$
by (\ref{Cho2}), (\ref{Cho4}) and (\ref{Lie bracket1}).

Case XI. $a\in\frak h\oplus\frak k$,    $b\in\frak h\oplus \frak k$,   $c\in\frak h\oplus \frak k$:
Clearly we   have  that
$$([a,b]|c)_{\frak d}=0=(a|[b,c])_{\frak d}$$
by (\ref{Lie bracket1}).
\end{proof}


\bibliographystyle{amsplain}

\begin{thebibliography}{1}

\bibitem{HoLeT}
T.~J. Hodges, T.~Levasseur, and M.~Toro, \emph{Algebraic structure of
  multi-parameter quantum groups}, Advances in Math. \textbf{126} (1997),
  52--92.

\bibitem{HoKa}
Jin Hong and Seok-Jin Kang, \emph{Introduction to quantum groups and crystal
  bases}, Graduate Studies in Mathematics Vol. 42, Amercian Mathematical
  Society, 2002.

\bibitem{Jos}
A.~Joseph, \emph{Quantum groups and their primitive ideals}, A series of modern
  surveys in mathematics, vol. 3. Folge$\cdot$Band 29, Springer-Verlag, 1995.


\bibitem{Kac}
V.G. Kac, \emph{Infinite dimensional Lie algebras}, Third edition, Cambridge
  University Press, 1990.

\bibitem{Maj}
Shahn Majid, \emph{Foundations of quantum group theory}, Cambridge University
  Press, 2000.

\bibitem{Oh13}
Sei-Qwon Oh, \emph{A Poisson Hopf algebra related to a twisted quantum group},
   arXiv:1511.00783, (To appear in Comm. Algebra)

\bibitem{Tan}
Toshiyuki Tanisaki, \emph{Modules over quantized coordinate algebras and
  PBW-basis}, arXiv:1409.7973v1.

\end{thebibliography}

\providecommand{\bysame}{\leavevmode\hbox to3em{\hrulefill}\thinspace}
\providecommand{\MR}{\relax\ifhmode\unskip\space\fi MR }
\providecommand{\MRhref}[2]{%
  \href{http://www.ams.org/mathscinet-getitem?mr=#1}{#2}
}
\providecommand{\href}[2]{#2}


\end{document}